\newcommand{\ii}{\'\i}
\begin{document}

\title{Variational characterization of the speed of reaction diffusion fronts for gradient dependent diffusion}

\titlerunning{Variational Characterization for the  speed of fronts}        

\author{Rafael~D.~Benguria \and
        M.~Cristina~Depassier
}


\institute{R.~D.~Benguria \at
              Instituto de F\ii sica, Pontificia Universidad Cat\'olica de Chile, Casilla 306, Santiago 22, Chile \\
             \email{rbenguri@uc.cl}           
           \and
            M.~C.~Depassier \at
            Instituto de F\ii sica, Pontificia Universidad Cat\'olica de Chile, Casilla 306, Santiago 22, Chile \\
             \email{mcdepass@uc.cl}    
}

\date{Received: date / Accepted: date}

\maketitle

\begin{abstract}
We study the asymptotic speed of  travelling fronts of the scalar reaction diffusion  for positive reaction terms and  with a diffusion 
coefficient depending nonlinearly  on the  concentration and on  its gradient. We restrict our study to diffusion coefficients of the form 
$D(u,u_x) = m u^{m-1} u_x^{m(p-2)}$  for which existence and convergence to travelling fronts has been established.   We formulate a 
variational principle for the asymptotic speed of the fronts.  Upper and lower bounds for the speed valid for any $m\ge0, p\ge 1$ are 
constructed.  When $m=1, p=2$ the problem reduces to the  constant diffusion problem and the bounds correspond to   the classic 
Zeldovich--Frank--Kamenetskii lower bound and the Aronson-Weinberger upper bound respectively.  In the special case  
$m(p-1) = 1$ a local lower bound can be constructed  which coincides with the aforementioned upper bound. The speed in this case is 
completely determined  in agreement with recent results. 

 \keywords{Variational principles \and reaction--diffusion equation \and gradient dependent diffusion \and p--Laplacian}


\subclass{MSC 35 K 57 \and MSC 35 K 65 \and MSC 35 C 07 \and  MSC 35 K 55 \and MSC 58 E 30}
\end{abstract}

\section{\, Introduction}
\label{intro}
In this work we study the asymptotic propagation of  fronts of the scalar reaction diffusion equation, 
\begin{equation}\label{partial}
\partial_t u = \partial_x ( |\partial_x u^m|^{p-2} \partial_x u^m) + f(u),  \qquad f(0)=f(1)=0, f(u)>0 \qquad \mbox{in} \qquad  (0,1),
\end{equation}
which  reduces to the classical problem \cite{KPP37} when $m=1, p=2$. The diffusion term can be seen either  as the scalar version of 
the p-Laplacian acting on $u^m$  or as reaction diffusion equation with nonlinear diffusion coefficient 
$D(u,u_x) = m u^{m-1} |u_x|^{m(p-2)}$. Such diffusion coefficients are encountered, for example,  in hot plasmas 
\cite{Jardin2008,Wilhelm2001} and  the corresponding processes are referred to as doubly nonlinear diffusion  processes 
\cite{Vasquez}.

The classical problem $m=1$, $p=2$,  is fully understood \cite{AW78,KPP37}.  When nonlinear diffusion is included several scenarios 
may arise depending on the precise form of the 
diffusion coefficient. The case of  a power of concentration diffusion coefficient of the form $D(u) =u^s$  has been studied extensively 
beginning with the analytical solution found for 
$s=2$. Existence and convergence results are known for all $s$. A distinctive feature of  density dependent diffusion is the appearance 
of a finite wave at the asymptotic speed.
This is true even in the simpler $p=2$ case when $m>1$ (see, e.g., \cite{Ar80,BeDe95}, and references therein).
In recent work   \cite{Vasquez} the more general case of doubly nonlinear diffusion is considered. It is shown that for all $m>0, p>1$  
such that $\gamma = m (p-1)-1 >0$,  a unique 
monotonic increasing travelling wave joining the equilibria $u=0$ and $u=1$ exists for speeds $c\ge c_*(m,p)$ and none if 
$0<c<c_*(m,p)$.  
For $c=  c_*(m,p)$ the travelling wave (TW) is finite, whereas for $c>  c_*(m,p)$ 
the TW is positive (see \cite{Vasquez}, Theorem 2.1).  
In the case $\gamma=0$ a unique monotonic increasing travelling wave joining the equilibria $u=0$ and $u=1$ exists for speeds $c\ge 
c_*(m,p)$ and none if $0<c<c_*(m,p)$.  
For $c=  c_*(m,p)$ the travelling wave (TW) is positive  (see \cite{Vasquez}, Theorem 2.2).  Moreover, when $\gamma=0$, 
an explicit expression for the minimal speed is given, 
\begin{equation}\label{c0}
c_*(m,p)|_{\gamma=0} \equiv c_0(m,p) = ( m^2 p^{m+1} f'(0))^{1/(m+1)}.
\end{equation}
The convergence of suitable initial conditions to the travelling wave  of minimal speed  is  demonstrated in \cite{Vasquez} as well. 

The purpose of this work is to establish a variational characterization for the speed $c_*(m,p)$.  The exact value of the speed cannot 
be determined in general  however upper and lower bounds on the speed for general values of $m$ and  $p$ can be obtained.
The main result of the present work (see Theorem 1 below) is the variational  expression for the speed
\begin{equation}\label{vpIntro}
c_* = \sup_g  \left[ p \left( \frac{m}{p-1}\right)^{(p-1)/p}  \frac{\int_0^1  u^{(m-1)(p-1)/p}  h^{1/p}  f^{(p-1)/p}  g^{(p-1)/p}  du}{\int_0^1 g(u) 
du} \right],
\end{equation}
from where upper and lower  bounds will be constructed. In (\ref{vpIntro}), $g \in C^1(0,1)$ is such that $g(u) \ge 0$, with 
$h(u) \equiv - g'(u) >0$ in $(0,1)$ and $\int_0^1 g(u) \, du$ finite.

We find that for any $m>0,p>1$ (with $\gamma \ge 0$) the asymptotic speed 
is bounded  by
\begin{eqnarray}
  & \left( \frac{m p}{p-1} \int_0^1 u^{(m-1)} f(u) du\right)^{(p-1)/p} 
   \le c_*(m,p) \le \nonumber \\
   & p \left( \frac{m}{p-1}\right)^{ (p-1)/p} \sup_u \left[ u^{\gamma}  \left( \frac{f}{u}\right)^{(p-1)}\right]^{1/p}.
\label{bounds}
\end{eqnarray}
The lower bound is a generalization of the Zeldovich-Frank-Kamenetskii (ZFK) bound (see, e.g., \cite{BeNi92,BeDe98}).  Effectively, 
for $m=1, p=2$ their classical bound 
$$
c_* \ge c_{ZFK} = \sqrt{ 2 \int_0^1 f(u) du}
$$ is recovered.
 The upper bound, for $m=1,p=2$ reduces to the Aronson-Weinberger  upper bound \cite{AW78}
 $$
 c\le \sup_u 2 \sqrt{ \frac{f(u)}{u}} \, .
 $$
 An interesting case arises when $\gamma=0$. As mentioned above  the speed can be  determined exactly  \cite{Vasquez} and it is 
 given by
 $c_0(m,p)$ when $\gamma=0$. Here we recover this result from the variational principle showing that when $\gamma=0$ a local 
 lower bound can be found choosing an adequate trial function $g(u)$.  This lower bound is exactly $c_0(m,p)$. The upper bound 
 given in (\ref{bounds}) reduces to $c_0(m,p)$ when $\gamma =0$ and $f(u)$ satisfies the KPP criterion $sup_u \sqrt{f(u)/u} = f'(0).$   
 In the following sections we prove the statements made above. Our variational principle reduces to our standard variational principle 
 (see \cite{BeDe96CMP}, \cite{BeDe96PRL}, 
 \cite{BeDe98}) when $p=2$. 

\bigskip
\bigskip 
 \noindent
The rest of this manuscript is organized as follows: 
In Section 2 we derive the variational principle, 
in Section 3 the bounds for general 
values of $\gamma$ with $m>0$, $p>1$ are obtained, 
and in Section 4 we derive a lower bound of the Zeldovich-Frank-Kamenetskii type for any $\gamma \ge 0$.

\section{\, Variational Principle}

We consider left travelling wave solutions $u(\xi )$ with $\xi=x+c t$  so that the TW profile satisfies $u_{\xi} >0.$  The TW solution 
satisfies the ordinary differential equation (ODE) 
\begin{equation}\label{ode}
c u_{\xi} =  m^{p-1} \frac{d}{d\xi} \left ( u^{(m-1)(p-1)} (u_{\xi})^{p-1} \right) + f(u).
\end{equation}
From here on we denote $u' = u_{\xi}$. Following the usual procedure, we introduce the phase space coordinate 
$$
q(u) =  u^{m-1} u'(u)
$$
in terms of which the ODE for the travelling waves becomes, after dividing by $q,$ 
\begin{equation}\label{phaseeq}
\frac{c}{m^{p-1}} =  \frac{d}{d u} (q(u))^{p-1}+ \frac{ u^{m-1}f(u)}{ m^{p-1}q(u)}.
\end{equation}
Here, it is convenient to define
\begin{equation}\label{F}
F(u) = \frac{ u^{m-1}f(u)}{ m^{p-1}}.
\end{equation}

\noindent
In what follows, let us define the functional
\begin{equation}
\mathcal{J}[g] \equiv  \frac{ p\, m^{p-1} }{(p-1)^{(p-1)/p} } \frac{ \int_0^1    h(u)^{1/p}  F(u)^{(p-1)/p}  g^{(p-1)/p}  du}{\int_0^1 g(u) du}, 
\label{functional}
\end{equation}
which acts on $\mathcal{D}$, the space of functions $g \in C^1(0,1)$ such that $g(u) \ge 0$, with $h(u) = - g'(u) >0$ in $(0,1)$ and $
\int_0^1 g(u) \, du$ finite. Here
 the function $F(u)$ is given by (\ref{F}) above. 
With this notation we state our main result, which is embodied in the following theorem. 

\begin{theorem}[Variational characterization of $c_*$]
Let $f \in C^1[0,1]$ with $f(0)=f(1)=0$, $f(u) >0$ in $(0,1)$, and $f(u)$ concave in $[0,1]$. 
Assume $\gamma=m(p-1)-1 \ge  0$. 
Then, 
\begin{equation}
c_*(m,p) = J \equiv \sup \{ \mathcal{J}[g] \bigm| g \in \mathcal{D} \}
\label{eq:vc1}
\end{equation}
Moreover, 

\bigskip
\noindent
{i)} If $\gamma> 0$, there is a $g\in \mathcal{D}$, $\tilde g$ say, such that $J=\mathcal{J}[\tilde g]$. This maximizing $\tilde g$ is 
unique up to a multiplicative constant, and 

\bigskip
\noindent
{ii)} If $\gamma=0$ we construct  and explicit maximizing sequence $g_{\alpha} \in \mathcal{D}$ such that 
$\lim_{\alpha\to 0}\mathcal{J}[g_{\alpha}] = 
c_*(m,p)|_{\gamma=0}$, where $c_*(m,p)|_{\gamma=0}$ is given by (\ref{c0}) above. 
 \end{theorem}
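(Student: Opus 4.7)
The plan is to derive the variational expression from the phase-space ODE (\ref{phaseeq}) and then saturate it along the critical profile. Setting $P(u)\equiv q(u)^{p-1}$, equation (\ref{phaseeq}) takes the form
\begin{equation*}
\frac{c}{m^{p-1}} \,=\, P'(u) \,+\, \frac{F(u)}{P(u)^{1/(p-1)}}.
\end{equation*}
Multiplying by an arbitrary $g\in\mathcal{D}$, integrating over $(0,1)$, and integrating by parts with $h=-g'$, one finds that the boundary contribution $[g(u)P(u)]_0^1$ vanishes: at $u=1$ because $P(1)=q(1)^{p-1}=0$ at the rest state, and at $u=0$ using the endpoint asymptotics of $q$ supplied by \cite{Vasquez}. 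This yields
\begin{equation*}
\frac{c}{m^{p-1}}\int_0^1 g\,du \,=\, \int_0^1\!\Bigl[h(u)\,P(u) \,+\, \frac{g(u)F(u)}{P(u)^{1/(p-1)}}\Bigr]du.
\end{equation*}
I would then apply the sharp Young inequality $aP + bP^{-1/(p-1)}\ge p\,(p-1)^{-(p-1)/p}\, a^{1/p}b^{(p-1)/p}$, with equality iff $(p-1)\,a\,P^{p/(p-1)}=b$, to the integrand with $a=h$ and $b=gF$. This immediately gives $c\ge\mathcal{J}[g]$ for every admissible TW speed and hence $c_*(m,p)\ge J$.

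For the reverse inequality I would saturate Young's inequality along the critical trajectory $P_*=q_*^{p-1}$, whose existence is guaranteed by \cite{Vasquez}. Equality forces
\begin{equation*}
\frac{\tilde g'(u)}{\tilde g(u)} \,=\, -\,\frac{F(u)}{(p-1)\,P_*(u)^{p/(p-1)}},
\end{equation*}
which determines $\tilde g$ uniquely up to a positive multiplicative constant. For part (i) with $\gamma>0$, the finite-wave endpoint asymptotics of $q_*$ should imply that $\tilde g$ is $C^1$, positive, strictly decreasing, integrable on $(0,1)$, and compatible with the vanishing of the boundary terms; hence $\tilde g\in\mathcal{D}$ and $\mathcal{J}[\tilde g]=c_*(m,p)$. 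Uniqueness modulo scaling follows because any other maximizer must saturate the pointwise Young inequality and therefore solve the same first-order linear ODE.

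For part (ii) with $\gamma=0$, the linearization of (\ref{phaseeq}) near $u=0$ yields $P_*(u)\sim\beta^{p-1}u$ with $\beta$ fixed by a KPP-type minimization, so $F(u)/P_*(u)^{p/(p-1)}\sim\mathrm{const}/u$ near the origin and the formal $\tilde g$ blows up as a non-integrable power of $u$, putting it outside $\mathcal{D}$. I would therefore construct an explicit family $g_\alpha\in\mathcal{D}$, $\alpha>0$, by regularizing $\tilde g$ (for instance inserting a factor $u^\alpha$, or integrating the defining ODE only from a level $u=\epsilon(\alpha)>0$), verify admissibility and the required boundary conditions, and then compute $\lim_{\alpha\to 0}\mathcal{J}[g_\alpha]$. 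The principal obstacle lies in this limit: the near-origin asymptotic analysis must isolate the dominant contribution so that the KPP formula (\ref{c0}) emerges exactly, while showing that the bulk and cut-off contributions drop out. The concavity of $f$ enters at this stage to guarantee that the linearization at $u=0$ controls the minimal speed.
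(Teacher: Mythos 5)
Your derivation of the basic inequality $c\ge\mathcal{J}[g]$ (multiply the phase-plane equation by $g$, integrate by parts, apply the sharp Young/pointwise minimization in $P=q^{p-1}$) is exactly the paper's argument, and your constants and equality condition check out. Your treatment of case (i) is also essentially the paper's: the authors likewise define $\tilde g$ by the first-order ODE coming from saturation of the pointwise inequality along the critical trajectory (they realize it as $\tilde g=(v')^{-1/(p-1)}$ with $v'/v=c_*/(m^{p-1}q^{p-1})$, and use $q^{p-1}\approx c_*u/m^{p-1}$ near $u=0$ to get $\tilde g(0)<\infty$ and $\tilde g(1)=0$). Your uniqueness argument (any maximizer must saturate the pointwise inequality, hence solve the same linear ODE) differs from the paper's, which instead shows by H\"older that $\mathcal{J}$ is strictly concave along convex combinations of two normalized maximizers; both routes are valid, and the paper in fact records both observations.

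The genuine gap is in case (ii), which is where the real work of the theorem lies. You correctly diagnose that the formal maximizer is non-integrable at $u=0$ when $\gamma=0$ and that one must exhibit a maximizing sequence, but you then defer precisely the step that constitutes the proof: the verification that a concrete regularization achieves the limit $c_0(m,p)=(m^2p^{m+1}f'(0))^{1/(m+1)}$, which you yourself label ``the principal obstacle.'' Without it you have only $c_*\ge J\ge \lim\sup\mathcal{J}[g_\alpha]$ and no identification of that limit, so the equality $c_*=J$ for $\gamma=0$ is not established (note also that for $\gamma=0$ you need the matching upper bound $c_*\le c_0(m,p)$, which the paper gets from Jensen's inequality in Section 3 together with $\sup_u f(u)/u=f'(0)$ from concavity). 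The paper closes this gap explicitly: it takes
\begin{equation*}
g_\alpha(u)=\frac{\alpha}{1-\alpha}\,(u^{\alpha-1}-1),\qquad \int_0^1 g_\alpha\,du=1,
\end{equation*}
writes $\mathcal{J}[g_\alpha]=p\,m^{2/(m+1)}J_{g_\alpha}[f]$ with $J_g[f]=\int_0^1[u^{m-1}h^m f g]^{1/(m+1)}du$, splits $J_{g_\alpha}[f]=J_{g_\alpha}[uf'(0)]+(J_{g_\alpha}[f]-J_{g_\alpha}[uf'(0)])$, evaluates the first term in closed form as a Euler Beta function whose limit is $f'(0)^{1/(m+1)}$, and bounds the remainder via $|a^{1/(m+1)}-b^{1/(m+1)}|\le|a-b|^{1/(m+1)}$ and a power-law estimate on $|f(u)-uf'(0)|/u$ near the origin. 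Some such explicit computation (or an equivalent asymptotic analysis of your cut-off family) is indispensable; as written, your part (ii) is a program, not a proof.
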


\begin{proof}
\, Let $g(u) \in \mathcal{D}$. Multiplying (\ref{phaseeq}) by $g(u)$ and integrating  in $u$ between $0$ and $1$ we obtain after 
integrating by parts,
\begin{equation}\label{g1}
\frac{c}{m^{p-1}}\int_0^1 g(u) du  = \int_0^1 du \left( h(u) q(u)^{p-1} + \frac{g(u) F(u)}{q(u)}\right) 
 \equiv \int_0^1 \Phi(u) du.
\end{equation}
where  $h(u) \equiv -g'(u) >0$ and we assume that $g(u)$ is such that \newline $\lim_{u\rightarrow 0} g(u) q(u)^{p-1} =0$.

The integrand of the right side,
\begin{equation}
\Phi = h(u) q(u)^{p-1} + \frac{g(u) F(u)}{q(u)},
\label{DefinitionPhi}
\end{equation}
at fixed $u$ can be considered as a function of $q$. It is clear from (\ref{DefinitionPhi}) that  $\Phi(q)$ has a unique positive  minimum 
at $\hat q$ so that $\Phi(q) \ge \Phi(\hat q)$. A simple calculation yields
\begin{equation}
\hat q =\left[ \frac{ F g}{(p-1)  h } \right]^{1/p},
\label{CaseEquality}
\end{equation}
and 
$$
\Phi(\hat q) = \frac{p  g h^{1/p} F^{(p-1)/p}}{(p-1)^{(p-1)/p}}.
$$
It follows from (\ref{g1}) that
\begin{equation}
c_* \ge \frac{ p\, m^{p-1} }{(p-1)^{(p-1)/p} } \frac{ \int_0^1    h^{1/p}  F^{(p-1)/p}  g^{(p-1)/p}  du}{\int_0^1 g(u) du}, 
\label{lowerbound}
\end{equation}
for every $g \in \mathcal{D}$. To establish (\ref{eq:vc1}) we need only prove that the supremum of the right side of (\ref{lowerbound}) 
over all $g \in \mathcal{D}$ is actually 
$c_*$. we will do this separately in the cases $\gamma>0$ and $\gamma=0$.

\bigskip
\noindent
{\bf i)} {\bf Case $\gamma>0$.}  Below we show that  when $\hat q$ is the solution of (\ref{phaseeq}) (with $c=c_*$), equality is 
attained in (\ref{lowerbound}) for some 
$g \in \mathcal{D}$, 
so that we obtain the variational characterization for the speed
\begin{equation}\label{vp}
c_* = \sup_g    \frac{ p\, m^{p-1} }{(p-1)^{(p-1)/p} } \frac{ \int_0^1    h^{1/p}  F^{(p-1)/p}  g^{(p-1)/p}  du}{\int_0^1 g(u) du}.
\end{equation}
We have already proven (see (\ref{lowerbound}) above) that $c_*\ge \mathcal{J}[g]$ for every $g \in \mathcal{D}$. What we will 
actually show here is that when $\gamma>0$, there exists a $g \in \mathcal{D}$, $\tilde g$ say, such that
$c_*=\mathcal{J}[\tilde  g]$. Hence in the case $\gamma>0$ the variational principle reads, 
\begin{equation}
c_*= \max_{g \in \mathcal{D}}(\mathcal{J}[g]).
\label{newvp2}
\end{equation}
In the case $\gamma>0$, the existence of a travelling wave for any $c \ge c_*$ was proven in Theorem 2.1 of Reference 
\cite{Vasquez}. Moreover, in the case $\gamma>0$,
the solution of (\ref{phaseeq}) satisfies, 
\begin{equation}
q(u)^{p-1} \approx \frac{c_*}{m^{p-1}} u, 
\label{eq:em1}
\end{equation}
in the neighborhood of $u=0$. In order to show that the $\sup$ is actually  attained in (\ref{eq:vc1}) we have to show that there exists $
\tilde g \in \mathcal{D}$ satisfying
(\ref{CaseEquality}) when $\hat q$ is a solution of (\ref{phaseeq}). To construct such a $g$, let $v$ be the solution of 
\begin{equation}
\frac{v'}{v} =  \frac{c_*}{m^{p-1}} \frac{1}{q^{p-1}(u)},  
\label{eq:em2}
\end{equation}
where $q$ is a solution of (\ref{phaseeq}). Notice that this $v$ is unique up to a multiplicative constant. A simple calculation using 
(\ref{eq:em2}), (\ref{phaseeq}), and the definition (\ref{F}) of $F$, yields, 
\begin{equation}
\frac{v''}{v} =  \frac{c_*}{m^{p-1}} \frac{F(u)}{q^{2 p-1}(u)}. 
\label{eq:em3}
\end{equation}
Choosing
\begin{equation}
\tilde g(u) = \frac{1}{(v'(u))^{1/(p-1)}}, 
\label{eq:em4}
\end{equation}
it follows from (\ref{eq:em2}) and  (\ref{eq:em3}) that, 
\begin{eqnarray}
&-\tilde g'(u) = \frac{1}{p-1}  \frac{1}{(v'(u))^{p/(p-1)}} \, v''= \frac{1}{p-1}  \frac{1}{(v'(u))^{/(p-1)}} \, \frac{v''}{v} \frac{v}{v'} \nonumber
\\
&= \frac{1}{p-1}  \, g(u) \, \frac{F(u) }{q^{p}(u)}.
\label{eq:em5}
\end{eqnarray}
which is precisely (\ref{CaseEquality}). From (\ref{eq:em1}) and (\ref{eq:em2}) we have that 
\begin{equation}
v(u) \approx A \, u \qquad \mbox{and} \qquad v'(u) \approx A,
\label{eq:em6}
\end{equation}
near $u=0$. Hence, it follows from (\ref{eq:em4}) that 
\begin{equation}
\tilde g(0) = A^{-1/(p-1)} < \infty. 
\label{eq:em7}
\end{equation}
Integrating (\ref{eq:em2}) and using (\ref{eq:em6}) we can write explicitly, 
\begin{equation}
v(u) = {\rm exp} \left(  \int_{u_0}^u  \frac{c_*}{m^{p-1}} \frac{1}{q^{p-1}(s)} \, ds \right),
\label{eq:em8}
\end{equation}
for some $0 < u_0<1$. Clearly, the value of $A$ in (\ref{eq:em6}) is determined by the value of $u_0$. Finally, 
using  (\ref{eq:em2}), (\ref{eq:em4}), and  (\ref{eq:em8}), we 
can write, 
\begin{equation}
\tilde g(u) =      \frac{m \, q(u)}{{c_*}^{1/(p-1)}}  \,  {\rm exp} \left( \frac{1}{p-1} \,  \int_{u}^{u_0}  \frac{c_*}{m^{p-1}} 
\frac{1}{q^{p-1}(s)} \, ds \right).
\label{eq:em9}
\end{equation}
Since, the integrand in (\ref{eq:em9}) is positive, $u_0 <1$, and $q(1)=0$, it follows from  (\ref{eq:em9}) that $\tilde g(1)=0$. From all 
the results above it follows that 
$\tilde g$ given by (\ref{eq:em9}) is in $\mathcal{D}$, and that  $c_* = \mathcal{J}[\tilde g]$.   

\bigskip
It is clear from the construction above that $\tilde g$ is unique up to a multiplicative cosntant. The uniqueness of the maximizing $g \in 
\mathcal{D}$, however, can be 
seen directly from our variational principle (\ref{functional}). In fact, suppose that there are two different maximizers, say $g_1, g_2  \in 
\mathcal{D}$, with 
$\int_0^1 g_1(u) \, du = \int_0^1 g_2(u) \, du =1$. Then, for any $\alpha \in (0,1)$ consider now, 
\begin{equation}
g_{\alpha} (u) = \alpha \,  g_1(u) + (1-\alpha) \, g_2(u).
\label{eq:em10}
\end{equation}
It is clear from (\ref{eq:em10}) that $g_{\alpha} \in \mathcal{D}$ and that $\int_0^1 g_{\alpha} (u) \, du =1$. Using H\"older's inequality 
with exponents $p$ and $p'=p/(p-1)$, 
it follows from (\ref{functional}) that 
\begin{equation}
\mathcal{J} [g_{\alpha}] >   \alpha \,  \mathcal{J}   [g_1] + (1-\alpha) \, \mathcal{J}  [g_2] = c_*, 
\label{eq:em11}
\end{equation}
which is a contradiction with the fact that $g_1$ and $g_2$ are the maximizers. Notice that the inequality in (\ref{eq:em11}) is strict if 
$g_1 \not \equiv g_2$.  

\bigskip
\bigskip
\noindent
{\bf ii)} {\bf Case $\gamma=0$.} For later purposes it is convenient to denote
\begin{equation}
J_g[f] =  \int_0^1 [ u^{m-1} h(u)^m  f(u) g(u)]^{1/(m+1)} \, du.
\label{jotas}
\end{equation}
It then follows from (\ref{F}) and (\ref{functional}) that 
\begin{equation}
\mathcal{J}[g] = p \,  m^{2/(m+1)} \,  J_g[f] 
\label{eq:J1}
\end{equation}
in the case $\gamma=0$, when we conveniently normalize $g$ so that $\int_0^1 g(u) \, du =1$. 
Now, choose as a trial function the sequence
\begin{equation}\label{galfa}
g_\alpha(u) = \frac{\alpha}{1-\alpha}  (u^{\alpha-1} - 1), \qquad 0 < \alpha <1, \qquad \mbox{with}  \qquad \alpha \to 0.
\end{equation}
Notice that for each $\alpha \in (0,1)$, $g_{\alpha}(u) >0$, $g_{\alpha}'(u) <0$, $g_{\alpha}(1)=0$, and $\lim_{u \to 0}  [u \, g_{\alpha}
(u)] = 0$, so these are appropriate trial functions. 
Moreover, we have normalized the $g_{\alpha}$'s so that $\int_0^1 g_{\alpha}(u) \, du = 1$.

\bigskip

With this choice we will show that $\lim_{\alpha \rightarrow 0} J_{g_\alpha} [f] = f'(0)^{1/(m+1)} $ so that 
$\mathcal{J}[g_{\alpha}] \to ( m^2 p^{m+1} f'(0))^{1/(m+1)} = c_0(m,p)$ as $\alpha \to 0$. To do so we write
$J[f] = J[u f'(0)] + J[f]- J[u f'(0)]$ and show that 
\begin{equation}
J_{g_\alpha}[u f'(0)] \to  f'(0)^{1/(m+1)}, \qquad J_{g_\alpha}[f]- J_{g_\alpha}[u f'(0)] \to 0, \qquad \mbox{as} \qquad  \alpha \to 0.
\label{eq:J2}
\end{equation}
While the proof of the second limit is given in the Appendix, the proof of the first is as follows. Using (\ref{jotas}) with $g=g_{\alpha}$ we have, 
\begin{eqnarray}
J_{g_\alpha}[u f'(0)] &= f'(0)^{1/(m+1)} \alpha (1-\alpha)^{-1/(m+1)}\int_0^1  ( u^{m (\alpha-1) } ( u^{\alpha-1} -1) )^{1/(m+1)} \, du
\nonumber \\ 
&= f'(0)^{1/(m+1)}  \alpha (1-\alpha)^{-(m+2)/(m+1)} B\left( \frac{m+2}{m},\frac{\alpha}{1-\alpha}\right),
\end{eqnarray}
where $B(x,y)$ denotes the Euler Beta function.
Now, $B(t,s) = \Gamma(t) \Gamma(s)/ \Gamma(t+s)$, hence 
\begin{equation}
J_{g_\alpha}[u f'(0)] = f'(0)^{1/(m+1)}  \alpha (1-\alpha)^{-(m+2)/(m+1)} \frac{ \Gamma(\frac{m+2}{m}) \Gamma( \frac{\alpha}{1-\alpha})} 
{\Gamma( \frac{m+2}{m} + \frac{\alpha}{1-\alpha})} 
\label{eq:J3}
\end{equation}
Using  $\lim_{x \to 0} x \, \Gamma(x)=1$  to evaluate the limit of the right side of (\ref{eq:J3}) when $\alpha \to 0$, we finally conclude, 
$J_{g_\alpha}[u f'(0)] \to  f'(0)^{1/(m+1)}$ as $\alpha \to 0$ from above. As indicated before, (\ref{eq:J2}) then implies that 
$\mathcal{J}[g_{\alpha}] \to ( m^2 p^{m+1} f'(0))^{1/(m+1)} = c_0(m,p)$ as $\alpha \to 0$, which concludes the proof of the Theorem. 
\end{proof}

\section{An upper bound on the speed  for $\gamma \ge  0$}

In this section we derive from our variational principle (i.e., from Theorem 1 above) an explicit upper bound on the speed of fronts. 
In order to do this we rewrite (\ref{vp}) as
\begin{equation} 
c_* =    \sup_g  \left[  \frac{ p\, m^{p-1} }{(p-1)^{(p-1)/p} } \frac{ \int_0^1    [  h  F^{(p-1)} /g ] ^{1/p}  g   du}{\int_0^1 g(u) du}
\right].
\end{equation}
Since the mapping $x \to x^{1/p}$ is concave for $p>1$, defining the probability measure  $d\nu = g(u) du/\int_0^1 g du$,  and using 
Jensen's inequality we get, 
\begin{eqnarray} \label{Jen1}
& c_* \le  \sup_g   \frac{ p\, m^{p-1} }{(p-1)^{(p-1)/p} }  \left[ \frac{ \int_0^1     h  F^{(p-1)}    du}{\int_0^1 g(u) du}\right] ^{1/p} \nonumber\\
 \le & \frac{ p\, m^{p-1} }{(p-1)^{(p-1)/p} }  \left[ \sup_u\left(\frac{F^{p-1}}{u}\right)  \sup_g \frac{ \int_0^1 h(u)\,   u \,du}{\int_0^1 g(u) du}
 \right] ^{1/p}.
\end{eqnarray}
Integrating $\int_0^1 h(u)\, u \, du$ by parts, using $g(1) =0$, and $\lim_{u\rightarrow 0} u g(u) =0 $ it follows from (\ref{Jen1}) that
$$
c_* \le   \frac{ p\, m^{p-1} }{(p-1)^{(p-1)/p} }   \left[ \sup_u \frac{F^{p-1}}{u}\right]^{1/p}.
$$
Replacing the expression for $F(u) $ in terms of $f(u)$  we finally obtain the  upper bound
\begin{equation}
c_* \le    p \left(  \frac{m}{p-1} \right)^{(p-1)/p}  \sup_u \left[  u^{\gamma} \left(  \frac{f(u)}{u}\right)^{(p-1)}\right]^{1/p}
\label{upga}
\end{equation}
with $\gamma = m(p-1)-1$ as defined before. When $\gamma=0$ the expression above reduces to 
\begin{equation}\label{upg0}
c_*|_{\gamma=0} \le  p (m^2)^{1/(m+1)} \sup_u  \left(  \frac{f(u)}{u}\right)^{1/(m+1)}.
\end{equation}
In particular, when $m=1$ (i.e., $p=2$ since $\gamma=0$), (\ref{upg0}) is the classical upper bound of Aronson and Weinberger 
\cite{AW78}. 
Notice that for the reaction profiles considered here (i.e., $f(u)$ positive and concave in $[0,1]$, $f(0)=f(1)=0$ and $f \in C^1[0,1]$, we 
clearly have that
$\sup_{u\in [0,1]} f(u)/u = f'(0)$, and in fact we have equality in (\ref{upg0}). 

\section{\, Integral lower bound: a Zeldovich--Frank--Kamenetskii type bound}

From the variational characterization lower bounds can be constructed choosing specific values for the trial function $g(u)$. 
In this section we construct a lower bound which involves the integrals of the reaction term as   the Zeldovich--Frank--Kamenetskii
classical bound \cite{ZFK38,BeNi92,BeDe98}. Our  ZFK type bound is embodied in the following lemma. 

\begin{lemma}
For any $m>0$,  $p>1$, $\gamma \ge 0$ and $f$ satisfying the hypothesis of Theorem 1,  we have that
\begin{equation}\label{ZFKnew}
c_* \ge  \left(  \frac{ m p }{p-1} \right)^{(p-1)/p}  \left[  \int_0^1  u^{m-1} f(u)  du\right] ^{(p-1)/p}.
\end{equation}
\end{lemma}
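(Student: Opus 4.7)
My plan is to apply the variational lower bound $c_*\ge\mathcal{J}[g]$ from Theorem 1 with a specific trial function $g\in \mathcal{D}$ engineered so that the numerator of $\mathcal{J}[g]$ collapses to a simple multiple of $\int_0^1 F\,du$, after which an elementary bound on the denominator delivers the claim. Setting $G(u)=\int_u^1 F(s)\,ds$ with $F$ as in (\ref{F}), I would take $g(u)=G(u)^{1/p}$. A direct differentiation produces $h(u)=-g'(u)=F(u)/(p\,G(u)^{(p-1)/p})$ and hence the key identity $h\,g^{p-1}=F/p$. Since $h^{1/p}g^{(p-1)/p}=(hg^{p-1})^{1/p}=(F/p)^{1/p}$, the numerator in (\ref{functional}) evaluates exactly to $p^{-1/p}\int_0^1 F\,du$, with no remaining dependence on $g$.

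Before combining the estimates I would verify $g\in\mathcal{D}$: $g\in C^1(0,1)$ since $F\in C[0,1]$, $g(1)=0$, both $g$ and $h$ are positive on $(0,1)$, $\int_0^1 g\,du$ is finite because $g\le G(0)^{1/p}$ on $[0,1]$, and the decay $\lim_{u\to 0}g(u)q(u)^{p-1}=0$ follows from $g(0)=G(0)^{1/p}<\infty$ and $q^{p-1}\sim c_* u/m^{p-1}$ near $u=0$ by (\ref{eq:em1}). For the denominator I would apply Jensen's inequality to the concave function $x\mapsto x^{1/p}$ with Lebesgue measure on $[0,1]$, then Fubini, then $s\le 1$, yielding $\int_0^1 g\,du=\int_0^1 G^{1/p}\,du\le\bigl(\int_0^1 G\,du\bigr)^{1/p}=\bigl(\int_0^1 sF(s)\,ds\bigr)^{1/p}\le\bigl(\int_0^1 F\,du\bigr)^{1/p}$. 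Inserting the two estimates into the variational principle gives $c_*\ge\mathcal{J}[g]\ge(p/(p-1))^{(p-1)/p}\,m^{p-1}\bigl(\int_0^1 F\,du\bigr)^{(p-1)/p}$; substituting $m^{p-1}\int_0^1 F\,du=\int_0^1 u^{m-1}f(u)\,du$ and absorbing the residual $m^{(p-1)^2/p}$ factor reproduces (\ref{ZFKnew}).

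The main obstacle is really just identifying this trial function. The motivation for $g=G^{1/p}$ is that it saturates (up to a scalar) Hölder's inequality $\int_0^1(hg^{p-1})^{1/p}F^{(p-1)/p}\,du\le\bigl(\int_0^1 hg^{p-1}\,du\bigr)^{1/p}\bigl(\int_0^1 F\,du\bigr)^{(p-1)/p}$, equality requiring $hg^{p-1}\propto F$, which integrates to $g^p\propto\int_u^1 F\,ds$. Once this choice is recognized, the remainder of the argument is the identity $hg^{p-1}=F/p$ combined with one routine Jensen-plus-Fubini estimate on $\int_0^1 g\,du$.
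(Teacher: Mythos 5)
Your proposal is correct and follows essentially the same route as the paper: the identical trial function $g(u)=\left(\int_u^1 F(s)\,ds\right)^{1/p}$, the same key identity $h\,g^{p-1}=F/p$, and the same final bound $\int_0^1 g\,du\le\left(\int_0^1 F\,du\right)^{1/p}$. The only cosmetic difference is that you reach this last estimate via Jensen--Fubini and $s\le 1$, whereas the paper simply notes that $g$ is decreasing so $\int_0^1 g\,du\le g(0)=\left(\int_0^1 F\,du\right)^{1/p}$.
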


\begin{proof}

Choose as a trial function of our variational principle (\ref{eq:vc1}) the function
$$
g(u) = \left( \int_u^1  F(u')   du' \right)^{1/p}.
$$
It is simple to verify that  $g \in C^1(0,1)$, $h = - g' >0$,  $g(1) =0$, and $g(u) \ge 0$ in $[0,1]$. Moreover, since $g$ is decreasing, 
$\int_0^1 g(u) du \le g(0) =  \left( \int_0^1  F(u')   du' \right)^{1/p} < \infty$. Hence, $g \in \mathcal{D}$. 
A simple calculation yields
$$
h(u) = \frac{F(u)}{p}  \left( \int_u^1  F(u')   du' \right)^{1/p-1}
$$
and $ h g^{p-1} = F(u)/p$. 
It follows then from (\ref{vp}) that
$$
c_* \ge  \frac{ p\, m^{p-1} }{(p-1)^{(p-1)/p} } \left( \frac{1}{p} \right)^{1/p} \frac{ \int_0^1   F(u)  du}{\int_0^1 g(u) du}.
$$
Now, since $g(u)$ is a decreasing positive function, $\int_0^1 g(u) du \le g(0)$. Hence, 
\begin{equation}
c_* \ge  \frac{ p\, m^{p-1} }{(p-1)^{(p-1)/p} } \left( \frac{1}{p} \right)^{1/p}  \left( \int_0^1   F(u)  du\right)^{(p-1)/p}.
\label{ZFK1}
\end{equation}
If we express the right side  of (\ref{ZFK1}) in terms of the original reaction term $f(u)$ we get (\ref{ZFKnew}) which proves the lemma.
\end{proof}


\begin{acknowledgements}
This work was supported by Fondecyt (Chile) projects 114--1155, 116--0856 and 
by Iniciativa Cient\'\i fica Milenio, ICM (Chile), through the Millennium Nucleus RC--120002. 
\end{acknowledgements}


\begin{thebibliography}{}
\bibitem{AW78}
D.~G.~Aronson and H.~F.~Weinberger, {\it Multidimensional nonlinear diffusion arising in population
genetics}, Adv. Math., {\bf 30}, 33--76 (1978).


\bibitem{Ar80}
D.~G.~Aronson, {\it  Density--dependent interaction--diffusion systems}. In:  Proc. Adv. Seminar on
Dynamics and Modeling of Reactive System,  W. Stewart et al. (eds.),   Academic Press, New York, 1161--1176 (1980).

\bibitem{Aud017}
A.~Audrito  {\it Bistable and monostable reaction equations with doubly nonlinear diffusion},  arXiv:1707.01240v1(2017).


\bibitem{Vasquez}
A.~Audrito and J.~L.~V\'azquez  {\it The Fisher-KPP problem with doubly nonlinear diffusion},  arXiv:1601.05718 (2016).

\bibitem{BeDe95}
R.~D.~Benguria and M.~C.~Depassier,
{\it A variational principle for the asymptotic speed of fronts of the density dependent diffusion--reaction equation}, 
Physical Review E, {\bf 52}, 3285--3287 (1995).


\bibitem{BeCiDe95}
R.~D.~Benguria, J.~Cisternas  and M.~C.~Depassier, {\it Variational calculations for thermal combustion waves}
Phys. Rev. E,  {\bf 52},  4410--4413 (1995).

\bibitem{BeDe96CMP}
R.~D.~Benguria and M.~C.~Depassier, {\it Variational characterization of the speed of propagation
of fronts for the nonlinear diffusion equation}, Commun. Math. Phys. {\bf 175}, 221--227 (1996).

\bibitem{BeDe96PRL}
R.~D.~Benguria and M.~C.~Depassier, {\it Speed of fronts of the reaction--diffusion equation},
Phys. Rev. Lett.,  {\bf 77}, 1171--1173 (1996).

\bibitem{BeDe98}
R.~D.~Benguria and M.~C.~Depassier, 
{\it A Variational method for nonlinear eigenvalue problems}, in 
Advances in Differential Equations and Mathematical Physics, Eric Carlen, Evans Harrell, and Michael Loss, Eds., Amer. Math. Soc., 
Contemporary Mathematics, {\bf 217}, 1--17 (1998).


\bibitem{BeNi92}
H.~Berestycki and L.~Nirenberg, {\it Travelling fronts in cylinders}, Ann. Inst. Henri Poincar\'e\,,
Analyse non lin\'eaire, {\bf 9}, 497--572 (1992).

\bibitem{GaSa15}
A.~Gavioli, L.~Sanchez, 
{\it A variational property of critical speed to travelling waves in the presence of nonlinear diffusion}, 
Applied Mathematics Letters, {\bf 48}, 47--54 (2015).


\bibitem{Jardin2008}
S.~C.~Jardin, G.~Bateman, G.~W.~Hammett and L.~P.~Ku, {\it On 1--d diffusion problems with a gradient--dependent diffusion 
coefficient},  J.~Comp.~Phy., {\bf 227}, 8769--8775 (2008).

\bibitem{KPP37}
A.~N.~Kolmogorov, I.~G.~Petrovskii, and N.~S.~Piskunov, {\it A study of the diffusion equation
with increase in the amount of substance, and its applications to a biological problem}, in
Selected Works of A.~N.~Kolmogorov, V.~M.~Tikhomirov (Ed.), Kluwer Academic Publishers,
Dordrecht, The Netherlands (1991).

\bibitem{Wilhelm2001}
H.~Wilhelmsson and E.~Lazzaro, {\it Reaction-diffusion problems in the physics of hot plasmas}.
IOP Publ., Bristol and Philadelphia (2001).


\bibitem{ZFK38}
Y.~B.~Zeldovich and D.~A.~Frank-Kamenetskii, {\it A theory of thermal flame propagation} , in Selected Works of Yakov Borisovich 
Zeldovich, Volume I: Chemical Physics and Hydrodynamics, J.~P.~Ostriker, G.~I.~Barenblatt and R.~A.~Sunyaev (Eds.), Princeton 
University Press, Princeton (1992).

\end{thebibliography}


\section*{Appendix}

In this appendix we show that 
\begin{equation}
J_{g_{\alpha}}[f]- J_{g_{\alpha}}[u f'(0)]\rightarrow 0 \qquad \mbox{when}    \qquad \alpha\rightarrow 0,
\end{equation}
where $g_{\alpha}$ is given by  (\ref{galfa}). 
We defined
$$
J_{g_{\alpha}}[f] =  \int_0^1 [ u^{m-1} h_{\alpha}^m  f(u) g_{\alpha}(u)]^{1/(m+1)} \, du,
$$
so that
\begin{eqnarray}
& \left| J_{g_{\alpha}}[f] - J_{g_{\alpha}}[u f'(0)] \right| \le  \nonumber \\ 
&\int_0^1 ( u^{m-1} h_\alpha^m \, g_{\alpha}(u))^{1/(m+1)} \, | f(u)^{1/(m+1)} -   (u f'(0))^{1/(m+1)}  | \,  du.
\label{A0}
\end{eqnarray}
Since for $m \ge 0$, $1/(m+1) \le 1$, it is not difficult to verify the inequality
$
| a^{1/(m+1)} - b^{1/(m+1)}| \le |a-b|^{1/(m+1)} 
$
for all $a\ge 0, b \ge 0, m\ge 0. $ In the present   case, we have
\begin{equation} \label{A1}
| f(u)^{1/(m+1)} -   (u f'(0))^{1/(m+1)}  |  \le  | f(u) - u f'(0)| ^{1/(m+1)} .
\end{equation}
If $f(u)$ and its derivative are continuous in $[0,1]$, there exist $d>0,  k>0$ such that
\begin{equation} \label{A2}
\frac{| f(u) - u f'(0)|}{u} < d \, \,  u^k.
\end{equation}
Using (\ref{A1}) and (\ref{A2}) in (\ref{A0}), together with the explicit form of $g_{\alpha}$ we have that
$$
\left| J_{g_{\alpha}}[f] - J_{g_{\alpha}}[u f'(0)] \right| \le \frac{\alpha}{(1 -\alpha)^{1/(m+1)}} \int_0^1 d^{1/(m+1)} u^{N(\alpha)} \, du,
$$
where
$$
N(\alpha) = \alpha - 1 + \frac{k}{(m+1)}.
$$
Since $\alpha>0$ and $k>0$, $N(\alpha) > -1$ and $u^{N(\alpha)}$ is integrable. Performing the integral we finally find
$$
\left| J_{g_{\alpha}}[f] - J_{g_{\alpha}}[u f'(0)] \right| \le \frac{m+1} { \alpha (m+1) + k} \, \, \frac{ \alpha d^{1/(m+1)}}{ (1-\alpha)^{1/(m
+1)}}  \rightarrow 0 \qquad \mbox{when} \qquad \alpha\rightarrow 0.
$$

\end{document}